\title[Analyticity of periodic Lei-Lin solutions of Navier-Stokes]
{Existence and analyticity of the Lei-Lin solution of the Navier-Stokes equations on the torus}
\author[D.M. Ambrose]{David M. Ambrose}
\author[M.C. Lopes Filho]{Milton C. Lopes Filho}
\author[H.J. Nussenzveig Lopes]{Helena J. Nussenzveig Lopes}
\newtheorem{lemma}{Lemma}
\newtheorem{theorem}{Theorem}
\newtheorem{definition}{Definition}
\newtheorem{remark}{Remark}
\newcommand{\vertiii}[1]{{\left\vert\kern-0.25ex\left\vert\kern-0.25ex\left\vert #1
    \right\vert\kern-0.25ex\right\vert\kern-0.25ex\right\vert}}
\renewcommand{\hat}{\widehat}
\renewcommand{\tilde}{\widetilde}
\newcommand{\torus}{\mathbb{T}}
\newcommand{\Z}{\mathbb{Z}}
\newcommand{\PP}{\mathbb{P}}
\newcommand{\R}{\mathbb{R}}
\newcommand{\real}{\mathbb{R}}
\newcommand{\dd}{\ \mathrm{d}}
\DeclareMathOperator{\dv}{div}
\begin{document}


\begin{abstract}
Lei and Lin have recently given a proof of a global mild solution of the three-dimensional
Navier-Stokes equations in function spaces based on the Wiener algebra.  An alternative proof
of existence of these solutions was then developed by Bae, and this new proof allowed for an estimate of the radius of analyticity of
the solutions at positive times.  We adapt the Bae proof to prove existence of the Lei-Lin solution in the spatially periodic setting,
finding an improved bound for the radius of analyticity in this case.
\end{abstract}

\maketitle


\section{Introduction}

Lei and Lin have developed a mild solution of the three-dimensional Navier-Stokes equations
which is global in time for small data in spaces related to the Wiener algebra \cite{LeiLin2011}.
Bae subsequently outlined a different proof of this result, with a benefit of the new approach being
that a lower bound for the radius of analyticity of solutions is established in a straightforward way
\cite{baePAMS}.  The results of both these papers are for solutions with domain $\mathbb{R}^{3};$ in the
present work, we prove existence of the corresponding solutions in $\mathbb{T}^{3}$ instead, and
find an improvement in the lower bound for the radius of analyticity in this case.

The lower bound for the radius of analyticity demonstrated in \cite{baePAMS} is proportional to $\sqrt{t};$
the data need not be analytic at time $t=0,$ but the solution then becomes analytic with radius
at least $\sqrt{t}$ at any positive time.  This bound remains valid for all time.  This lower bound
for the radius of analyticity follows the scaling of the linear parabolic term, i.e. the viscous term,
in the Navier-Stokes equations: since it is a second-order term, the radius of analyticity may be
demonstrated to grow at least like $t^{1/2}.$  This bound has been obtained before in other works
such as \cite{baeBiswasTadmor}, \cite{foiasTemamJFA}, \cite{grujicKukavica}.  For related problems such as the
Kuramoto-Sivashinsky equation, the leading-order linear parabolic term is fourth-order, and these
methods yield instead a lower bound on the radius of analyticity like $t^{1/4}$ \cite{ambroseMazzucato},
\cite{grujicKukavica}.  Biswas attained a general result along these lines
on dissipative equations with quadratic nonlinearities in \cite{biswasGeneral}.

A related result is the proof of global existence of vortex sheets by Duchon and Robert \cite{duchonRobert}.  This result
is also on free space, i.e. the domain is $\mathbb{R},$ and the leading-order linear terms are
first-order.  Thus, consistent with the previous results, the radius of analyticity of solutions grows at least like $t.$
In a series of papers, the first author and collaborators have adapted the Duchon-Robert method
to the spatially periodic setting.  In the spatially periodic case, one attains a radius of analyticity growing at least like $t$ regardless of the
order of the linear terms in the evolution equations \cite{ambroseBLMS}, \cite{ambroseBonaMilgrom},
\cite{ambroseMazzucato}.  This linear-in-time growth of the radius of analyticity is an improvement on
long time scales over the fractional-power growth guaranteed by other methods on free space, but
does represent slower growth initially.

In the present work, we combine the two approaches, demonstrating that the lower bound on the radius of analyticity
for the Lei-Lin solution of the three-dimensional Navier-Stokes equations on the torus is initially like $\sqrt{t},$ later improving to $t$.  We mention that Foias and Temam mention that one can prove a lower bound on the radius of analyticity for solutions of the Navier-Stokes equations on the torus growing linearly in time, but their result demonstrates this only for
a finite time \cite{foiasTemamJFA}; see also \cite{biswasLocal}.
Ferrari and Titi later study general parabolic evolution equations on periodic domains,
and state that the linear lower bound for the radius of analyticity of solutions can be proved as long as the Sobolev norm
of the solution remains bounded \cite{ferrariTiti}.  Biswas and Swanson also study the three-dimensional
Navier-Stokes equations in the periodic
case, and demonstrate with data in weighted $\ell^{p}$ spaces that one finds small solutions existing for all time, with radius of
analyticity growing linearly in time \cite{biswasSwanson}.
In contrast with these results, in the present work
we are using the Wiener-based norms of Lei and Lin, and we also include
the improved lower bound on the radius of analyticity at small times.

It will be of interest to explore the actual behavior of the radius of analyticity of such solutions
in more detail in the future, including possibly by computational methods.  The proof of our lower
bound, which informally is like $\max\{\sqrt{t},t\}$
(see Theorem \ref{analyticityNS} in Section \ref{radiusSection} for the technical statement)
would carry over to other nonlinear parabolic
equations such as Burgers’ equation.  It will especially be of interest to determine whether there
is indeed a sharp change in behavior of the radius of analyticity, switching from a fractional power
to linear growth.  Some limited studies have been made before on the radius of analyticity of solutions of Burgers’ equation, such as
\cite{senouf}, but more detailed studies seem to be called for.

The plan of this paper is as follows.  In Section \ref{existenceSection} we state and prove
Theorem \ref{well-posedNS}, on existence, uniqueness, and continuous dependence of the
Lei-Lin solution of the three-dimensional Navier-Stokes equations on the torus. We present a detailed argument, following the ideas outlined by Bae in \cite{baePAMS} for the full-space case.  In Section \ref{radiusSection} we state and prove Theorem \ref{analyticityNS},
developing both the fractional-power and linear bounds for the radius of analyticity of the solutions.

%

\section{Well-posedness}\label{existenceSection}

We will make frequent use of the Fourier coefficients of a periodic function, so we will now be definite about what these
Fourier coefficients are.  We let the three-dimensional torus $\mathbb{T}^{3}$ be the cube $[0,2\pi]^{3},$ with periodic boundary
conditions.
Given a smooth periodic function, $f,$ we may write
\begin{equation}\nonumber f(x)=\sum_{k\in\mathbb{Z}^{3}}\hat{f}(k)e^{ik\cdot x}.
\end{equation}
Alternatively, when convenient we may use the notation $\mathcal{F}f(k)=\hat{f}(k).$  The Fourier coefficients are given
by the formula
\begin{equation}\nonumber
\hat{f}(k)=\frac{1}{(2\pi)^{3}}\int_{\mathbb{T}^{3}}f(x)e^{ik\cdot x}\ \mathrm{d}x.
\end{equation}

Recall the following function space introduced in \cite{LeiLin2011}, adapted to the periodic setting:
\[X^{-1} \equiv \left\{ f \in \mathcal{D}^\prime(\torus^3) \,|\, \hat{f}(0)=0 \text{ {\rm and} } \|f\|_{X^{-1}} \equiv \sum_{k\in \Z^3_\ast}  \frac{|\hat{f}(k)|}{|k|} < \infty\right\}.\]
Here we have used the notation $\Z^3_\ast \equiv \Z^3 \setminus (0,0,0)$.
As observed in \cite{LeiLin2011}, the space $X^{-1}$ is contained in $BMO^{-1}$, the largest space in which well-posedness for 3D Navier-Stokes with small initial data has been established, see \cite{KochTataru2001}.
%
%
In \cite{baePAMS} two additional function spaces were introduced in order to establish existence of a solution to the incompressible Navier-Stokes equations in $\R^3$. We adapt the definitions of those function spaces to the periodic setting, while retaining the notation from \cite{baePAMS}:
%
%
\begin{multline}\nonumber
\mathcal{X}^{-1} \equiv \Bigg\{ f \in \mathcal{D}^\prime(\R_+\times\torus^3) \,|\,
\hat{f}(\cdot,0)=0 \text{ {\rm in} }\mathcal{D}^\prime(\R_+) \text{ {\rm and} } \\
\|f\|_{\mathcal{X}^{-1}} \equiv \sum_{k \in \Z^3_\ast} \sup_{t\geq 0} \frac{|\hat{f}(t,k)|}{|k|} < \infty\Bigg\},
\end{multline}
%
\[\mathcal{X}^{1} \equiv \left\{ f \in \mathcal{D}^\prime(\R_+\times\torus^3) \,|\, \|f\|_{\mathcal{X}^{1}} \equiv \sum_{k\in \Z^3_\ast} \int_0^{\infty}  |k||\hat{f}(t,k)|
\dd t < \infty\right\}.\]

In \cite{Jlali2016} L. Jlali observed that, when the physical space is $\real^3$ instead of $\torus^3$, there is a difficulty with the Fourier reconstruction which prevents completeness of a space which is similar to $\mathcal{X}^{1}$. We note that no such issue arises in our case since we deal with periodic functions. In fact, it is easy to see that $\mathcal{X}^{-1}$ and $\mathcal{X}^{1}$ are, both, Banach spaces with norms $\|\cdot\|_{\mathcal{X}^{-1}}$ and $\|\cdot\|_{\mathcal{X}^{1}}$, respectively. We will need to work in the intersection of these spaces, $\mathcal{X}^{-1} \cap \mathcal {X}^1$. To simplify notation we introduce the norm $\vertiii{\cdot}$ on $\mathcal{X}^{-1} \cap \mathcal {X}^1$:
\[ \vertiii{u} \equiv \|u\|_{\mathcal{X}^{-1}} + \|u\|_{\mathcal{X}^{1}}.\]

We consider the initial-value problem for the incompressible Navier-Stokes equations on $\torus^3$ with viscosity $\mu > 0$, given by the system:
\begin{equation}\label{NStorus}
\left\{
\begin{array}{ll}
  \partial_t v + (v \cdot \nabla) v = -\nabla p + \mu \Delta v & \text{ in } (0,\infty) \times \torus^3,\\
  \dv v = 0 & \text{ in } [0,\infty) \times \torus^3, \\
  v(0,\cdot) = v_0 & \text{ on } \{t=0\} \times \torus^3.
\end{array}
\right.
\end{equation}

In this section we will prove the existence of a {\em mild solution} $v \in \mathcal{X}^{-1} \cap \mathcal {X}^1$ to \eqref{NStorus} with initial data $v_0 \in X^{-1}$, as long as $v_0$ is sufficiently small. We begin with the definition of a mild solution of \eqref{NStorus} in $\mathcal{X}^{-1} \cap \mathcal {X}^1$. We use the notation $\PP$ for the Leray projector, $\PP = \mathbb{I} - \nabla \Delta^{-1} \dv$.

\begin{definition} \label{mildsolution}
Let $v_0 \in X^{-1}$.  We say that $v \in \mathcal{X}^{-1} \cap \mathcal {X}^1$ is a {\em mild solution} of \eqref{NStorus} with initial data $v_0$ if
\begin{equation} \label{mildformula}
v(t,\cdot) = e^{\mu t \Delta}[v_0] - \int_0^t e^{\mu (t-s) \Delta} \left[\PP \dv (v \otimes v) (s,\cdot)\right] \dd s.
\end{equation}
\end{definition}

To simplify notation in what follows we will write, throughout, $\mathcal{Y} \equiv \mathcal{X}^{-1} \cap \mathcal {X}^1$. As previously noted, $\mathcal{Y}$ is a Banach space with the norm $\vertiii{\cdot}$.

We are now ready to state our well-posedness result for \eqref{NStorus} in $\mathcal{Y}$ for sufficiently small initial data $v_0 \in X^{-1}$.

\begin{theorem} \label{well-posedNS}
  Let $v_0 \in X^{-1}$. Then there exists $\varepsilon_0$ such that, if $\| v_0 \|_{X^{-1}} < \varepsilon_0$, then there exists one and only one mild solution of the incompressible Navier-Stokes equations \eqref{NStorus} with initial data $v_0$ and such that
  \[\vertiii{v} \lesssim \|v_0\|_{X^{-1}}.\]
  Furthermore, the solution $v$ depends continuously on the initial data $v_0$.
\end{theorem}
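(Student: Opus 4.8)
The plan is to set up a fixed-point/contraction-mapping argument on the Banach space $\mathcal{Y} = \mathcal{X}^{-1} \cap \mathcal{X}^1$, applied to the map defined by the right-hand side of the mild-solution formula \eqref{mildformula}. Writing the problem on the Fourier side is the natural approach, since all three norms ($X^{-1}$, $\mathcal{X}^{-1}$, $\mathcal{X}^1$) are defined through Fourier coefficients and the heat semigroup acts diagonally: $\widehat{e^{\mu t \Delta}[g]}(k) = e^{-\mu t |k|^2}\hat{g}(k)$. Define $\Phi(v)(t,\cdot) = e^{\mu t \Delta}[v_0] - B(v,v)(t,\cdot)$, where $B(u,v)(t,\cdot) = \int_0^t e^{\mu(t-s)\Delta}\PP\dv(u\otimes v)(s,\cdot)\dd s$ is the (symmetrized) bilinear term. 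The goal is to show that for $\|v_0\|_{X^{-1}}$ small enough, $\Phi$ maps a closed ball $\{\vertiii{v}\le R\}$ into itself and is a contraction there; the unique fixed point is then the desired mild solution.

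I would carry out the estimates in the following order. \emph{First}, estimate the linear term $e^{\mu t \Delta}[v_0]$ in $\vertiii{\cdot}$. For the $\mathcal{X}^{-1}$ part, $\sup_{t\ge 0} e^{-\mu t|k|^2} = 1$, so that piece is exactly $\|v_0\|_{X^{-1}}$. For the $\mathcal{X}^1$ part, I use $\int_0^\infty |k| e^{-\mu t |k|^2}\,\dd t = \frac{1}{\mu |k|}$, which again reproduces $\|v_0\|_{X^{-1}}$ up to the factor $1/\mu$; summing over $k$ gives $\vertiii{e^{\mu t\Delta}[v_0]} \lesssim \|v_0\|_{X^{-1}}$. \emph{Second}, and this is the crux, I estimate the bilinear term $B(u,v)$ in $\vertiii{\cdot}$ by $\vertiii{u}\,\vertiii{v}$ (up to a constant depending on $\mu$). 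On the Fourier side, $\dv(u\otimes v)$ and the projector $\PP$ contribute factors bounded by $|k|$ and by $1$ respectively, and the convolution structure of the product gives $\widehat{u\otimes v}(s,k) = \sum_{j\in\Z^3} \hat{u}(s,k-j)\otimes\hat{v}(s,j)$. The key algebraic point is the kernel estimate $|k| \le |k-j| + |j|$ together with the time integral $\int_0^t e^{-\mu(t-s)|k|^2}|k|^2\,\dd s \le 1$, which lets the $\mathcal{X}^1$-in-time integrability of one factor combine with the $\mathcal{X}^{-1}$ (supremum-in-time) control of the other. I expect the bound to take the schematic form $\vertiii{B(u,v)} \lesssim \frac{1}{\mu}\bigl(\|u\|_{\mathcal{X}^{-1}}\|v\|_{\mathcal{X}^1} + \|u\|_{\mathcal{X}^1}\|v\|_{\mathcal{X}^{-1}}\bigr) \lesssim \vertiii{u}\,\vertiii{v}$, after interchanging the $k$- and $j$-sums and reindexing.

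\emph{Third}, with the linear bound $\vertiii{e^{\mu t\Delta}[v_0]} \le C_0\|v_0\|_{X^{-1}}$ and the bilinear bound $\vertiii{B(u,v)} \le C_1\vertiii{u}\,\vertiii{v}$ in hand, the contraction argument is standard. On the ball of radius $R = 2C_0\|v_0\|_{X^{-1}}$, self-mapping requires $C_0\|v_0\|_{X^{-1}} + C_1 R^2 \le R$, and contractivity requires $2C_1 R < 1$; both hold once $\|v_0\|_{X^{-1}} < \varepsilon_0$ for a suitable threshold $\varepsilon_0$ (e.g. $\varepsilon_0 \sim \mu/(8C_0 C_1)$). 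Banach's fixed point theorem then yields existence and uniqueness of the solution in the ball, and the estimate $\vertiii{v}\le R \lesssim \|v_0\|_{X^{-1}}$ is immediate. \emph{Finally}, continuous dependence follows by applying the bilinear estimate to the difference of two solutions $v_1, v_2$ with data $v_{0,1}, v_{0,2}$: subtracting the mild-solution formulas and using bilinearity gives $\vertiii{v_1 - v_2} \le C_0\|v_{0,1}-v_{0,2}\|_{X^{-1}} + C_1(\vertiii{v_1}+\vertiii{v_2})\vertiii{v_1-v_2}$, and since $\vertiii{v_1}+\vertiii{v_2}$ is small the last term absorbs, leaving $\vertiii{v_1-v_2}\lesssim \|v_{0,1}-v_{0,2}\|_{X^{-1}}$.

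\emph{The main obstacle} I anticipate is the bilinear estimate in Step 2: one must carefully track how the divergence and Leray projector interact with the norms, verify that the factor of $|k|$ from $\dv$ is exactly what is needed to land in $\mathcal{X}^1$ (rather than producing a divergent $|k|$-sum), and confirm that the time integral against the heat kernel is uniformly bounded so that no residual dependence on $t$ spoils either the supremum in $\mathcal{X}^{-1}$ or the integrability in $\mathcal{X}^1$. The triangle inequality $|k|\le|k-j|+|j|$ distributing the single available power of $|k|$ onto the two convolution factors is the mechanism that makes the product land in the right space, and getting that bookkeeping right across both the $\mathcal{X}^{-1}$ and $\mathcal{X}^1$ components is where the real work lies.
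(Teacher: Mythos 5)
Your proposal follows essentially the same route as the paper's proof: a Fourier-side bilinear estimate in which the triangle inequality $|k|\le|k-j|+|j|$ distributes the derivative so that the $\mathcal{X}^1$ norm of one factor pairs with the $\mathcal{X}^{-1}$ norm of the other, combined with the semigroup bounds $\sup_{t}e^{-\mu t|k|^2}=1$ and $\int_0^\infty |k|e^{-\mu t|k|^2}\,\dd t = 1/(\mu|k|)$ for the linear term, a contraction/fixed-point argument for existence and uniqueness, and continuous dependence by writing the difference of two solutions as $e^{\mu t\Delta}[u_0-v_0]-B(u-v,u+v)$ and absorbing the bilinear contribution via smallness of the data. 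The only discrepancy is the harmless slip that $\int_0^t e^{-\mu(t-s)|k|^2}|k|^2\,\dd s\le 1/\mu$ rather than $\le 1$, which your final schematic bound with the $1/\mu$ prefactor already accounts for.
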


The proof of Theorem \ref{well-posedNS} relies on a key estimate in $\mathcal{Y}$. We state this estimate in the lemma below.

\begin{lemma} \label{keyEstimateBilinearTerm}
Let $F$, $G \in \mathcal{Y}$. Consider the self-adjoint bilinear operator $B:\mathcal{Y}\times\mathcal{Y} \to \mathcal{Y}$ given by
\begin{equation} \label{bilinop}
B(F,G) \equiv \int_0^t e^{\mu (t-s) \Delta} \left[\PP \dv (F \otimes G) (s,\cdot)\right]\dd s.
\end{equation}

Then
\begin{equation}\label{keyestimate}
\vertiii{B(F,G)} \leq C\left(1+\frac{1}{\mu}\right)\vertiii{F}\vertiii{G}.
\end{equation}
\end{lemma}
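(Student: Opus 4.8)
The plan is to pass to Fourier coefficients, where the semigroup, the divergence, and the Leray projector all act as explicit Fourier multipliers while the tensor product $F \otimes G$ becomes a convolution. Using $\hat{e^{\mu\tau\Delta}w}(k) = e^{-\mu\tau|k|^2}\hat{w}(k)$, observing that on the $k$-th mode the divergence contributes a factor bounded by $|k|$, and noting that $\PP$ acts as the orthogonal projection $P(k) = \mathbb{I} - k\otimes k/|k|^2$ (so that $|P(k)\xi| \leq |\xi|$), I would first establish the pointwise-in-$k$ bound
\[ |\hat{B(F,G)}(t,k)| \leq |k|\int_0^t e^{-\mu(t-s)|k|^2} \sum_{j \in \Z^3_\ast} |\hat{F}(s,k-j)|\,|\hat{G}(s,j)| \dd s, \]
where the convolution ranges over nonzero $j$ since $\hat{F}(s,0) = \hat{G}(s,0) = 0$. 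Here $|\hat{F}(s,m)|$ denotes the Euclidean length of the (vector-valued) Fourier coefficient, and the passage from $\widehat{\dv(F\otimes G)}$ to $|k|$ times the convolution follows from the Cauchy--Schwarz and Minkowski inequalities in the tensor indices.

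The next step is to reduce both norms defining $\vertiii{\cdot}$ to a single scalar quantity,
\[ S \equiv \sum_{k\in \Z^3_\ast} \int_0^\infty \sum_{j\in\Z^3_\ast} |\hat{F}(s,k-j)|\,|\hat{G}(s,j)| \dd s. \]
For the $\mathcal{X}^1$ norm I would multiply the pointwise bound by $|k|$, apply Tonelli's theorem to exchange the $t$- and $s$-integrations, and use $\int_s^\infty e^{-\mu(t-s)|k|^2}\dd t = 1/(\mu|k|^2)$; the resulting factor $1/(\mu|k|^2)$ cancels the two powers of $|k|$, yielding $\|B(F,G)\|_{\mathcal{X}^1} \leq \tfrac{1}{\mu} S$. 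For the $\mathcal{X}^{-1}$ norm I would divide by $|k|$, bound $e^{-\mu(t-s)|k|^2} \leq 1$, and use $\sup_{t\geq 0}\int_0^t \leq \int_0^\infty$, giving $\|B(F,G)\|_{\mathcal{X}^{-1}} \leq S$. Note that the constant $C(1+\tfrac{1}{\mu})$ in the statement has exactly this origin: the summand $1$ comes from the $\mathcal{X}^{-1}$ estimate and the $\tfrac{1}{\mu}$ from the $\mathcal{X}^1$ estimate.

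The crux of the argument, and the step I expect to be the main obstacle, is the Wiener-algebra product bound $S \leq \vertiii{F}\,\vertiii{G}$. Reindexing by $m = k-j$ and discarding the harmless constraint $m+j\neq 0$ (all terms are nonnegative) gives $S \leq \sum_{m,j} \int_0^\infty |\hat{F}(s,m)|\,|\hat{G}(s,j)|\dd s$. The difficulty is that $\mathcal{X}^{-1}$ controls a sup-in-time norm at regularity $-1$ while $\mathcal{X}^1$ controls a time-integrated norm at regularity $+1$, so neither factor alone controls the zero-regularity Wiener sum appearing in $S$; a naive splitting produces unbounded frequency ratios $|m|/|j|$. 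The resolution is a frequency dichotomy combined with the trade between $\sup_t$ and $\int_0^\infty$: for each pair with $|m|\leq |j|$ one estimates $\int_0^\infty |\hat{F}(s,m)|\,|\hat{G}(s,j)|\dd s \leq (\sup_s |\hat{F}(s,m)|)\int_0^\infty |\hat{G}(s,j)|\dd s = \frac{|m|}{|j|}\cdot \frac{\sup_s|\hat{F}(s,m)|}{|m|}\cdot |j|\int_0^\infty |\hat{G}(s,j)|\dd s$, and since $|m|/|j|\leq 1$ this is dominated by the $(m,j)$ summand of $\|F\|_{\mathcal{X}^{-1}}\|G\|_{\mathcal{X}^1}$; for $|m|>|j|$ one swaps the roles of $F$ and $G$, landing in $\|F\|_{\mathcal{X}^1}\|G\|_{\mathcal{X}^{-1}}$.

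Summing the two regimes yields $S \leq \|F\|_{\mathcal{X}^{-1}}\|G\|_{\mathcal{X}^1} + \|F\|_{\mathcal{X}^1}\|G\|_{\mathcal{X}^{-1}} \leq \vertiii{F}\,\vertiii{G}$, and combining with the two reductions gives $\vertiii{B(F,G)} = \|B(F,G)\|_{\mathcal{X}^{-1}} + \|B(F,G)\|_{\mathcal{X}^1} \leq (1+\tfrac{1}{\mu})\,S \leq C(1+\tfrac{1}{\mu})\vertiii{F}\,\vertiii{G}$, as claimed. The remaining verifications — the multiplier identities, the use of $|m|\geq 1$ for nonzero integer frequencies, and Tonelli's theorem — are routine, so the entire weight of the proof sits in the dichotomy estimate for $S$.
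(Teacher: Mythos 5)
Your proposal is correct and follows essentially the same route as the paper: pass to Fourier coefficients, bound the heat semigroup by $1$ for the $\mathcal{X}^{-1}$ estimate and integrate it exactly (via Tonelli) to get the $1/\mu$ factor for the $\mathcal{X}^{1}$ estimate, then control the resulting zero-regularity convolution sum by trading $\sup_t$ against $\int_0^\infty\dt$ with a frequency-ratio splitting. Your dichotomy $|m|\leq|j|$ versus $|m|>|j|$ is just a cosmetic variant of the paper's algebraic inequality $1 \leq \frac{|k-\ell|}{|\ell|} + \frac{|\ell|}{|k-\ell|}$, which accomplishes the identical redistribution of frequency weights.
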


\begin{proof}

The proof of this lemma involves calculations which are very similar to those in \cite[(2.2) to (2.10)]{baePAMS}, adapted to the periodic case.

We must show that both $\|B(F,G)\|_{\mathcal{X}^{-1}}$ and $\|B(F,G)\|_{\mathcal{X}^{1}}$ are bounded by the right-hand-side of \eqref{keyestimate}.

Let us begin with $\|B(F,G)\|_{\mathcal{X}^{-1}}$.
Consider the $k$-th Fourier coefficient, $k \in \Z^3_\ast$, of $B(F,G)$:
  \begin{multline} \label{kthcoeff}
  \hat{B(F,G)}(t,k) = \int_0^t e^{-\mu(t-s)|k|^2} \sum_{i,j=1}^3\left( 1 - \frac{k}{|k|^2}k_i \right) k_j \widehat{F_i G_j}(s,k) \dd s \\
  = \int_0^t e^{-\mu(t-s)|k|^2} \sum_{i,j=1}^3\left( 1 - \frac{k}{|k|^2}k_i \right) k_j \sum_{\ell \in \Z^3} \hat{F_i}(s,\ell) \hat{G_j}(s,k-\ell) \dd s .
  \end{multline}

Therefore we have
\begin{equation}\label{relevest}
\left| \hat{B(F,G)}(t,k)\right|
 \lesssim \int_0^t |k|e^{-\mu(t-s)|k|^2}\sum_{i,j=1}^3 \sum_{\ell \in \Z^3} |\hat{F_i}(s,\ell) ||\hat{G_j}(s,k-\ell)| \dd s .
\end{equation}

  Since $k\in \Z^{3}_\ast,$ we may multiply the right-hand-side of \eqref{relevest} by $1/|k|.$ We will also use the bound
  \begin{equation}\nonumber
    1 \leq \frac{|k-\ell|}{|\ell|} + \frac{|\ell|}{|k-\ell|},
  \end{equation}
for every $\ell \in \Z^3_\ast$, $\ell \neq k$. We then obtain that:
\begin{multline} \label{X-1keyestBeg}
\frac{1}{|k|} \left| \hat{B(F,G)}\,(t,k)\right| 
\lesssim \frac{1}{|k|} \int_0^t |k|e^{-\mu(t-s)|k|^2}\sum_{i,j=1}^3 \sum_{\ell \in \Z^3} |\hat{F_i}(s,\ell)|| \hat{G_j}(s,k-\ell) |\dd s  \\
  \leq C\int_0^t \sum_{i,j=1}^3 \sum_{\ell \in \Z^3} |\hat{F_i}(s,\ell) ||\hat{G_j}(s,k-\ell)| \dd s\\
  \leq C\sum_{i,j=1}^3 \sum_{\ell \in \Z^3_\ast,\ell \neq k} \int_0^t\left( |k-\ell| |\hat{G_j}(s,k-\ell)| \frac{|\hat{F_i}(s,\ell) |}{|\ell|} \right. 
  + \left. |\ell||\hat{F_i}(s,\ell)| \frac{|\hat{G_j}(s,k-\ell)|}{|k-\ell|} \right) \dd s \\
    \leq C\sum_{i,j=1}^3 \sum_{\ell \in \Z^3_\ast, \ell \neq k} \left[
    \sup_{0\leq s \leq t} \frac{|\hat{F_i}(s,\ell) |}{|\ell|} \int_0^t |k-\ell| |\hat{G_j}(s,k-\ell)| \dd s \right. \\
\\ \left.
    + \sup_{0\leq s \leq t}  \frac{|\hat{G_j}(s,k-\ell)|}{|k-\ell|}
    \int_0^t
    |\ell||\hat{F_i}(s,\ell)| \dd s   \right].
\end{multline}

We now take the $\sup_{t\geq 0}$ followed by the $\sum_{k \in \Z^3_\ast}$ in \eqref{X-1keyestBeg} to find:
\begin{multline}\nonumber
 \|B(F,G)\|_{\mathcal{X}^{-1}}
  = \sum_{k \in \Z^3_\ast} \sup_{t\geq 0} \frac{1}{|k|} \left| \hat{B(F,G)}\,(t,k)\right|   \\
   \leq C\sum_{k \in \Z^3_\ast} \sum_{i,j=1}^3 \sum_{\ell \in \Z^3_\ast, \ell \neq k} \left[
    \sup_{s \geq 0} \frac{|\hat{F_i}(s,\ell) |}{|\ell|} \int_0^{\infty} |k-\ell| |\hat{G_j}(s,k-\ell)| \dd s \right. \\
\\ \left.
    + \sup_{s\geq 0}  \frac{|\hat{G_j}(s,k-\ell)|}{|k-\ell|}
    \int_0^{\infty}|\ell||\hat{F_i}(s,\ell)| \dd s
        \right] \\
  = C\sum_{i,j=1}^3 \left[\left(\sum_{\ell \in \Z^3_\ast}\sup_{s \geq 0} \frac{|\hat{F_i}(s,\ell) |}{|\ell|}\right)
  \left(\sum_{m \in \Z^3_\ast} \int_0^{\infty} |m| |\hat{G_j}(s,m)| \dd s\right) \right. \\
  \left. +  \left(\sum_{\ell \in \Z^3_\ast} \int_0^{\infty}|\ell||\hat{F_i}(s,\ell)| \dd s \right)
  \left(\sum_{m \in \Z^3_\ast}\sup_{s\geq 0}  \frac{|\hat{G_j}(s,m)|}{|m|}\right)
   \right]\\
   \\
   \leq C\| F \|_{\mathcal{X}^{-1}}\| G \|_{\mathcal{X}^{1}} + C\| G \|_{\mathcal{X}^{-1}}\| F \|_{\mathcal{X}^{1}}
   \leq C\vertiii{F}\vertiii{G}.
\end{multline}

Note that, in the calculation above, we made use of the change of variables $m=k-\ell$.

Next, we will estimate $\|B(F,G)\|_{\mathcal{X}^{1}}$. We multiply the $k$-th Fourier coefficient of $B(F,G)$,
$k \in \Z^3_\ast$, by $|k|$ and we use \eqref{kthcoeff} and \eqref{relevest} to find:
\begin{equation} \label{X1keyestBeg}
 |k| \left|\hat{B(F,G)}\,(t,k)\right| 
  \leq C\int_0^t |k|^2 e^{-\mu(t-s)|k|^2} \sum_{i,j=1}^3 \sum_{\ell \in \Z^3} |\hat{F_i}(s,\ell) ||\hat{G_j}(s,k-\ell)| \dd s.
\end{equation}

Let us integrate \eqref{X1keyestBeg} in time over $\R_+$ and exchange the order of integration. With this we obtain:
\begin{multline}\nonumber
\int_0^{\infty}  |k| \left|\hat{B(F,G)}\,(t,k)\right| \dd t \\
  \leq C\int_0^{\infty} \int_s^{\infty} |k|^2 e^{-\mu(t-s)|k|^2} \sum_{i,j=1}^3 \sum_{\ell \in \Z^3} |\hat{F_i}(s,\ell) ||\hat{G_j}(s,k-\ell)| \dd t \dd s \\
  = \frac{C}{\mu} \int_0^{\infty} \sum_{i,j=1}^3 \sum_{\ell \in \Z^3} |\hat{F_i}(s,\ell) ||\hat{G_j}(s,k-\ell)| \dd s.
\end{multline}

Finally, summing in $k \in \Z^3$ we deduce:
\begin{multline}\nonumber
 \left\|B(F,G) \right\|_{\mathcal{X}^{1}}
= \sum_{k \in \Z^3} \int_0^{\infty}  |k| \left|\hat{B(F,G)}\,(t,k)\right| \dd t \\
  \leq \sum_{k \in \Z^3} \frac{C}{\mu} \int_0^{\infty} \sum_{i,j=1}^3 \sum_{\ell \in \Z^3} |\hat{F_i}(s,\ell) ||\hat{G_j}(s,k-\ell)| \dd s \\
  \leq \sum_{k \in \Z^3} \frac{C}{\mu} \sum_{i,j=1}^3 \sum_{\ell \in \Z^3_\ast,\ell \neq k} \int_0^{\infty}\left( |k-\ell| |\hat{G_j}(s,k-\ell)| \frac{|\hat{F_i}(s,\ell) |}{|\ell|} \right. \\
  + \left. |\ell||\hat{F_i}(s,\ell)| \frac{|\hat{G_j}(s,k-\ell)|}{|k-\ell|} \right) \dd s \\
        \leq \frac{C}{\mu}\, \sum_{k \in \Z^3} \sum_{i,j=1}^3 \sum_{\ell \in \Z^3_\ast, \ell \neq k} \left[
    \sup_{s \geq 0} \frac{|\hat{F_i}(s,\ell) |}{|\ell|} \int_0^{\infty} |k-\ell| |\hat{G_j}(s,k-\ell)| \dd s \right. \\
\\ \left.
    + \sup_{s \geq 0}  \frac{|\hat{G_j}(s,k-\ell)|}{|k-\ell|}
    \int_0^{\infty}   |\ell||\hat{F_i}(s,\ell)| \dd s  \right]\\
    \leq \frac{C}{\mu}\,\| F \|_{\mathcal{X}^{-1}}\| G \|_{\mathcal{X}^{1}} + \frac{C}{\mu}\,\| G \|_{\mathcal{X}^{-1}}\| F \|_{\mathcal{X}^{1}}
   \leq \frac{C}{\mu}\,\vertiii{F}\vertiii{G}.
\end{multline}

This concludes the proof of the lemma.
\end{proof}

The following classical abstract result is the standard tool to prove small data well-posedness results for mild solutions of Navier-Stokes.

\begin{lemma} \label{fixedpoint}
Let ($X,$ $\vertiii{\cdot}_{X}$) be a Banach space. Assume that $\mathcal{B}:X \times X \to X$ is a continuous bilinear operator and let $\eta>0$ satisfy $\eta\geq \|\mathcal{B}\|_{X\times X\rightarrow X}$. Then, for any $x_0 \in X$ such that
\[4\eta \vertiii{x_0}_{X}<1,\]
there exists one and only one solution to the equation
\[x=x_0+\mathcal{B}(x,x) \qquad \text{ with } \vertiii{x}_{X} < \frac{1}{2\eta}.\]
Moreover, $\vertiii{x}_{X} \leq 2\vertiii{x_0}_{X}$.
\end{lemma}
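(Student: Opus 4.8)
The plan is to apply the Banach contraction mapping theorem to the map
\[
\Phi(x) \equiv x_0 + \mathcal{B}(x,x),
\]
whose fixed points are exactly the solutions of $x = x_0 + \mathcal{B}(x,x)$. I would carry out the argument on the closed ball $\overline{B} \equiv \{x \in X : \vertiii{x}_X \leq 2\vertiii{x_0}_X\}$, which, as a closed subset of $X$, is a complete metric space in the norm $\vertiii{\cdot}_X$. The bound $\eta \geq \|\mathcal{B}\|_{X\times X\to X}$ gives the basic estimate $\vertiii{\mathcal{B}(x,y)}_X \leq \eta \vertiii{x}_X\vertiii{y}_X$, and the entire argument then reduces to exploiting the single hypothesis $4\eta\vertiii{x_0}_X < 1$.

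First I would check that $\Phi$ maps $\overline{B}$ into itself: for $\vertiii{x}_X \leq 2\vertiii{x_0}_X$,
\[
\vertiii{\Phi(x)}_X \leq \vertiii{x_0}_X + \eta \vertiii{x}_X^2 \leq \vertiii{x_0}_X\bigl(1 + 4\eta\vertiii{x_0}_X\bigr) \leq 2\vertiii{x_0}_X,
\]
where the last inequality is exactly $4\eta\vertiii{x_0}_X \leq 1$. Next I would verify the contraction property using the elementary identity $\mathcal{B}(x,x) - \mathcal{B}(y,y) = \mathcal{B}(x, x-y) + \mathcal{B}(x-y, y)$, valid for any bilinear $\mathcal{B}$, which yields on $\overline{B}$
\[
\vertiii{\Phi(x) - \Phi(y)}_X \leq \eta\bigl(\vertiii{x}_X + \vertiii{y}_X\bigr)\vertiii{x-y}_X \leq 4\eta\vertiii{x_0}_X \, \vertiii{x-y}_X,
\]
and the contraction constant $4\eta\vertiii{x_0}_X$ is strictly less than $1$ by hypothesis. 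The contraction mapping theorem then produces a unique fixed point $x \in \overline{B}$, which by construction satisfies $\vertiii{x}_X \leq 2\vertiii{x_0}_X < \tfrac{1}{2\eta}$, giving both existence in the prescribed ball and the asserted a priori bound.

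The one point requiring care is that uniqueness is asserted not merely within $\overline{B}$ but in the larger set $\{\vertiii{x}_X < \tfrac{1}{2\eta}\}$, so the contraction theorem alone does not suffice; I expect this a posteriori uniqueness estimate to be the main (though still routine) subtlety. To close the gap I would argue directly: if $x$ and $y$ both solve the equation with $\vertiii{x}_X, \vertiii{y}_X < \tfrac{1}{2\eta}$, subtracting and applying the same bilinear identity gives $\vertiii{x - y}_X \leq \eta(\vertiii{x}_X + \vertiii{y}_X)\vertiii{x-y}_X$, and since $\eta(\vertiii{x}_X + \vertiii{y}_X) < 1$ this forces $\vertiii{x-y}_X = 0$. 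The degenerate case $x_0 = 0$, where $\overline{B} = \{0\}$, is covered by the same computation. Finally, I would emphasize that choosing the radius to be exactly $2\vertiii{x_0}_X$ is what makes the self-mapping and contraction conditions collapse onto the single inequality $4\eta\vertiii{x_0}_X < 1$ while simultaneously delivering the bound $\vertiii{x}_X \leq 2\vertiii{x_0}_X$.
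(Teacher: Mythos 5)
Your proof is correct and complete. Note that the paper does not actually prove this lemma: it only remarks that the hypotheses make the map $x \mapsto Tx \equiv x_0+\mathcal{B}(x,x)$ a contraction in $\left\{z\in X \;\middle|\; \vertiii{z}_{X}< \frac{1}{2\eta}\right\}$ and defers to the cited references (Cannone's Lemma 1.2.6, etc.). Your argument fills in those details, and your choice of domain is in fact sounder than the set named in the paper's remark: the open ball of radius $\frac{1}{2\eta}$ is not a closed (hence not complete) subset of $X$, and on it the estimate $\vertiii{Tx-Ty}_{X}\leq \eta\left(\vertiii{x}_{X}+\vertiii{y}_{X}\right)\vertiii{x-y}_{X}$ only yields a Lipschitz constant that can approach $1$, so the Banach fixed-point theorem does not literally apply there. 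Working instead on the closed ball of radius $2\vertiii{x_0}_{X}$, where the contraction constant is $4\eta\vertiii{x_0}_{X}<1$, simultaneously gives existence, completeness of the domain, and the a priori bound $\vertiii{x}_{X}\leq 2\vertiii{x_0}_{X}$. You also correctly identified the one genuine subtlety — that uniqueness is claimed in the strictly larger set $\left\{\vertiii{x}_{X}<\frac{1}{2\eta}\right\}$, beyond what the contraction theorem delivers — and your direct a posteriori estimate via the bilinear identity, using $\eta\left(\vertiii{x}_{X}+\vertiii{y}_{X}\right)<1$, is exactly the standard way to close that gap. This two-step structure (contraction on a small closed ball, then separate uniqueness in the larger ball) is precisely the content of the classical result the paper cites.
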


The hypotheses in Lemma \ref{fixedpoint} imply that the map $x \mapsto Tx \equiv x_0+B(x,x)$ is a contraction in
$\left\{z\in X \;|\; \|z\|< \frac{1}{2\eta}\right\}$ and, thus, has a fixed point. See also \cite[p. 37, Lemma 1.2.6]{Cannone1995} and \cite{AuscherTchamitchian1999,Cannone2003}.

\begin{proof}[Proof of Theorem \ref{well-posedNS}]



We begin by fixing $v_0 \in X^{-1}$.

According to Definition \ref{mildsolution} we seek $v \in \mathcal{Y}$ such that
\begin{equation} \label{fixedpointform}
v = e^{\mu t \Delta} [v_0] - B(v,v),
\end{equation}
where we are using the notation introduced in Lemma \ref{keyEstimateBilinearTerm}, \eqref{bilinop}.

We can use Lemma \ref{fixedpoint} with ($X = \mathcal{Y}$, $\vertiii{\cdot}$) and $x_0 = e^{\mu t \Delta} [v_0]$. According to our key estimate \eqref{keyestimate} in Lemma \ref{keyEstimateBilinearTerm}, we may take $\eta = \displaystyle{C\left(1+\frac{1}{\mu}\right)}$. Therefore, we have: if
\begin{equation} \label{indatacondsat}
4C\left(1+\frac{1}{\mu}\right) \vertiii{\,e^{\mu t \Delta\,} [v_0]}\|<1
\end{equation}
then there is one and only one $v \in \mathcal{Y}$, $\vertiii{v} \leq \mu/[2C(1+\mu)]$, which solves \eqref{fixedpointform}. Furthermore, the solution satisfies
\begin{equation} \label{soltnest}
\vertiii{v} \leq 2\vertiii{\,e^{\mu t \Delta} [v_0]\,}.
\end{equation}
The proof of Theorem \ref{well-posedNS} is concluded once we observe that
\[\hat{e^{\mu t \Delta} [v_0]}(t,k)= e^{-\mu t |k|^2}\hat{v_0}(k),\]
so that we easily obtain
\begin{equation}\label{hestimate}
  \vertiii{\,e^{\mu t \Delta} [v_0]\,} \leq \left(1+\frac{1}{\mu} \right)\|v_0\|_{X^{-1}}.
\end{equation}

Next we show the continuous dependence of the solution on initial data. To this end let us choose $u_0$ and $v_0 \in X^{-1}$ such that
\begin{equation}
\label{xtrahypindatau} 4C \left(1+\frac{1}{\mu}\right)^2 \|u_0\|_{X^{-1}} < 1  \quad \text{ and }
\quad
4C \left(1+\frac{1}{\mu}\right)^2 \|v_0\|_{X^{-1}} < 1.
\end{equation}
From \eqref{hestimate} we conclude that \eqref{indatacondsat} is satisfied for both $u_0$ and $v_0$. Let $u$, $v \in \mathcal{Y}$ be the unique solutions of \eqref{fixedpointform} with initial data $u_0$, $v_0$, respectively. Then we have
\begin{align} 
\nonumber
u - v & = e^{\mu t \Delta}[u_0] -e^{\mu t \Delta}[v_0] - B(u,u) + B(v,v)\\
\nonumber
      & = e^{\mu t \Delta}[u_0-v_0] - B(u-v,u+v),
\end{align}
where we have used that $B$ is self-adjoint and bilinear.

It follows from Lemma \ref{keyEstimateBilinearTerm} and from \eqref{hestimate} that
\begin{equation} \label{diffest}
\vertiii{u-v} \leq  \left(1+\frac{1}{\mu} \right)\|u_0 - v_0\|_{X^{-1}} + C\left(1+\frac{1}{\mu} \right)\vertiii{u-v} \left(\vertiii{u}+ \vertiii{v}\right).
\end{equation}
Putting together \eqref{soltnest} and \eqref{hestimate} we find
\begin{equation} \label{uandv}
\vertiii{u} + \vertiii{v} \leq 2 \left(1+\frac{1}{\mu} \right)(\|u_0\|_{X^{-1}} + \|v_0\|_{X^{-1}}).
\end{equation}
Therefore, substituting \eqref{uandv} in \eqref{diffest} leads to
\begin{equation}\nonumber
\vertiii{u-v} \leq  \left(1+\frac{1}{\mu} \right)\|u_0 - v_0\|_{X^{-1}} + 2C\left(1+\frac{1}{\mu} \right)^2 \left(\|u_0\|_{X^{-1}} + \|v_0\|_{X^{-1}}\right)\vertiii{u-v}.
\end{equation}
This then implies
\begin{equation} \label{continuity}
\left( 1 -  2C\left(1+\frac{1}{\mu} \right)^2 \left(\|u_0\|_{X^{-1}} + \|v_0\|_{X^{-1}}\right) \right) \vertiii{u-v} \leq  \left(1+\frac{1}{\mu} \right)\|u_0 - v_0\|_{X^{-1}}.
\end{equation}
The hypotheses we made on the initial data $u_0$, $v_0$, namely \eqref{xtrahypindatau}, 
imply that
\[ 1 -  2C\left(1+\frac{1}{\mu} \right)^2 \left(\|u_0\|_{X^{-1}} + \|v_0\|_{X^{-1}}\right) > 0.\]
Thus, continuity with respect to initial data follows from \eqref{continuity}. We note in passing that this is not (necessarily) uniform continuity.

\end{proof}

\begin{remark}
We note that the smallness condition on the initial data, with respect to the viscosity $\mu$, is:
\[\|v_0\|_{X^{-1}} < \varepsilon_0 < \frac{\mu^2}{4C(1+\mu)^2}.\]

This smallness condition contrasts with that obtained in \cite{LeiLin2011}, where it was only necessary to have $\|v_0\|_{X^{-1}}< \mu$.
\end{remark}

\section{Bounds on the radius of analyticity of solutions}\label{radiusSection}


Our second result concerns {\em analyticity} of the solution obtained in Theorem \ref{well-posedNS}. We follow the strategy set forth
by Lemari\'{e}-Rieusset in \cite[Theorem 24.3]{Lemarie-Rieusset2002}, finding our refined estimate for the radius of analyticity
by using two different exponential weights.

\begin{theorem} \label{analyticityNS}
  Let $v_0 \in X^{-1}$. For any $\alpha \in (0,1),$ there exists $\varepsilon_0$ such that, if $\| v_0 \|_{X^{-1}} < \varepsilon_0$, then the solution found in Theorem \ref{well-posedNS} with initial data $v_0$ is analytic, with radius of analyticity $R_\alpha \geq \max\{\mu\sqrt{t},\alpha \mu t\}$, In fact it holds that
    \[\vertiii{e^{\mu\sqrt{t}|D|} v} + \vertiii{e^{\alpha\mu t |D|} v}\lesssim \|v_0\|_{X^{-1}},\]
    where $|D|$ is the operator whose Fourier multiplier is $\sum_{i=1}^3 |k_i|$, $k \in \Z^3$.
\end{theorem}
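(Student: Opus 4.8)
The plan is to prove the two analyticity estimates by running the same fixed-point scheme as in Theorem~\ref{well-posedNS}, but in weighted versions of the spaces $\mathcal{Y}$. For a fixed exponential weight $e^{\beta(t)|D|}$ I would set $w = e^{\beta(t)|D|} v$, so that $\hat{w}(t,k) = e^{\beta(t)\sum_i|k_i|}\hat{v}(t,k)$, and seek a closed estimate for $\vertiii{w}$. The key point is that applying the weight to the mild-solution formula \eqref{mildformula} produces a weighted linear term $e^{\beta(t)|D|}e^{\mu t\Delta}[v_0]$ and a weighted bilinear term, and I would show that both can be controlled exactly as in Lemma~\ref{keyEstimateBilinearTerm} provided the weight is subadditive against the heat semigroup. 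Concretely, I would treat the two cases $\beta(t)=\mu\sqrt{t}$ and $\beta(t)=\alpha\mu t$ separately, establishing in each case a weighted bilinear estimate of the form $\vertiii{e^{\beta|D|}B(F,G)} \lesssim C(1+1/\mu)\vertiii{e^{\beta|D|}F}\,\vertiii{e^{\beta|D|}G}$ and a weighted linear estimate $\vertiii{e^{\beta|D|}e^{\mu t\Delta}[v_0]} \lesssim (1+1/\mu)\|v_0\|_{X^{-1}}$, and then invoke the abstract fixed-point Lemma~\ref{fixedpoint} in the weighted space.

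The heart of the matter is a pointwise Fourier inequality that lets the exponential weight be absorbed into the decaying heat kernel. The crucial algebraic fact is the subadditivity of $|D|$ under convolution: since $\sum_i |k_i| \leq \sum_i |\ell_i| + \sum_i |(k-\ell)_i|$, the weight factorizes as $e^{\beta|k|_1} \leq e^{\beta|\ell|_1}e^{\beta|k-\ell|_1}$ on each term of the convolution sum in \eqref{kthcoeff}, which is exactly what is needed to distribute the weight onto the two factors $F$ and $G$. For the remaining linear weight, the key estimate is that for $k\in\Z^3_\ast$ the product $e^{\beta(t)\sum_i|k_i|}e^{-\mu(t-s)|k|^2}$ (for the bilinear term) and $e^{\beta(t)\sum_i|k_i|}e^{-\mu t|k|^2}$ (for the linear term) stay bounded by a harmless factor. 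For the linear term with $\beta(t)=\mu\sqrt t$ one uses $\mu\sqrt t\,\sum_i|k_i| - \mu t|k|^2 \leq \mu\sqrt t\,\sqrt 3\,|k| - \mu t|k|^2$, and completing the square shows this is bounded by a constant independent of $t$ and $k$; for $\beta(t)=\alpha\mu t$ with $\alpha<1$ one uses $\sum_i|k_i|\leq |k|^2 + $ const together with the gap $1-\alpha$ to absorb the linear exponent into the quadratic decay, again uniformly. The role of $\alpha\in(0,1)$ is precisely to guarantee this absorption, which is why the linear-growth radius carries the factor $\alpha$.

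For the bilinear term the difficulty is slightly more delicate because the weight $e^{\beta(t)|D|}$ evaluated at the outer time $t$ must be compared against the factors $F,G$ evaluated at the inner time $s\leq t$. The clean way to handle this is to insert $e^{\beta(t)|k|_1} \leq e^{\beta(s)|\ell|_1}e^{\beta(s)|k-\ell|_1}\,e^{[\beta(t)-\beta(s)]|k|_1}$ using subadditivity and the monotonicity $\beta(s)\leq\beta(t)$, and then to absorb the surplus factor $e^{[\beta(t)-\beta(s)]|k|_1}$ into the semigroup factor $e^{-\mu(t-s)|k|^2}$ appearing in \eqref{relevest}. The same completing-the-square bound as above, now with the time increment $t-s$ in place of $t$, shows $e^{[\beta(t)-\beta(s)]\sum_i|k_i|}e^{-\mu(t-s)|k|^2}\lesssim 1$ uniformly in $0\leq s\leq t$ and $k\in\Z^3_\ast$, so the weighted bilinear estimate reduces verbatim to the unweighted computation already carried out in Lemma~\ref{keyEstimateBilinearTerm} with $F,G$ replaced by $e^{\beta|D|}F, e^{\beta|D|}G$. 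I expect this last absorption step, verifying the uniform bound on the mixed exponential against the time-increment heat factor, to be the main technical obstacle; once it is in place, the fixed-point argument and the final smallness threshold for $\varepsilon_0$ follow exactly as in the proof of Theorem~\ref{well-posedNS}, and summing the two weighted estimates gives the stated bound.
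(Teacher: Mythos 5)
Your overall strategy is the one the paper uses (a weighted fixed-point scheme, subadditivity of the weight under convolution, absorption of the surplus exponential into the heat kernel), but at the single step on which the theorem actually hinges your argument has a genuine gap. For the weight $\beta(t)=\alpha\mu t$ you propose to use ``$\sum_i|k_i|\leq|k|^2+\mathrm{const}$ together with the gap $1-\alpha$.'' If the constant is nonzero --- and it must be nonzero if the inequality is derived by real-variable means, e.g.\ $\sum_i|k_i|\le\sqrt3|k|\le|k|^2+\tfrac34$ --- then the absorption fails for $\alpha$ close to $1$: you get $e^{\alpha\mu t\sum_i|k_i|-\mu t|k|^2}\le e^{C\alpha\mu t}e^{-(1-\alpha)\mu t|k|^2}$, and even using $|k|^2\ge1$ to trade $e^{C\alpha\mu t}$ for $e^{C\alpha\mu t|k|^2}$ leaves a uniform-in-time bound only when $1-\alpha>C\alpha$, i.e.\ $\alpha<1/(1+C)$ (with $C=\tfrac34$, only $\alpha<\tfrac47$). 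What makes the result true for \emph{every} $\alpha\in(0,1)$ is the exact discrete inequality with no additive constant: for $k\in\Z^3$ each coordinate satisfies $|k_i|\le k_i^2$ (integers), hence $\sum_i|k_i|\le|k|^2$; equivalently $|k|\ge1$ on $\Z^3_\ast$ so $|k|\le|k|^2$. This is precisely the ``discreteness of the Fourier variable'' that the paper isolates in \eqref{usedDiscretenesss}, it is the feature unavailable on $\R^3$, and it is the reason the periodic result improves on the full-space one; your proposal never states it, and the inequality you do state does not suffice.

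Two further points. First, your absorption claims are stated as ``$e^{[\beta(t)-\beta(s)]\sum_i|k_i|}e^{-\mu(t-s)|k|^2}\lesssim1$,'' but bounding the \emph{whole} product by a constant consumes the Gaussian factor, and the $\mathcal{X}^1$ half of the norm cannot survive that: the finiteness of $\|\cdot\|_{\mathcal{X}^1}$ in Lemma \ref{keyEstimateBilinearTerm} comes exactly from $\int_s^\infty|k|^2e^{-\mu(t-s)|k|^2}\dd t=1/\mu$. You must instead absorb the surplus into a \emph{fraction} of the heat kernel, retaining $e^{-\frac{\mu}{2}(t-s)|k|^2}$ for the $\sqrt t$ weight (resp.\ $e^{-(1-\alpha)\mu(t-s)|k|^2}$ for the linear weight), after which the computation of Lemma \ref{keyEstimateBilinearTerm} goes through with $\mu$ replaced by $\mu/2$ (resp.\ $(1-\alpha)\mu$) --- this is also where the blow-up of $\varepsilon_0$ as $\alpha\to1^-$ comes from. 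Second, setting $w=e^{\beta(t)|D|}v$ and ``seeking a closed estimate for $\vertiii{w}$'' presupposes $w\in\mathcal{Y}$, which is not known a priori. The correct logic is to run Lemma \ref{fixedpoint} on the weighted equation to obtain some solution $W\in\mathcal{Y}$, then show that $e^{-\beta(t)|D|}W$ solves \eqref{mildformula} and lies in the uniqueness ball of Theorem \ref{well-posedNS} (after shrinking $\varepsilon_0$ so that both contraction constants apply simultaneously), and conclude $e^{-\beta(t)|D|}W=v$, hence $W=e^{\beta(t)|D|}v$. Finally, the theorem claims a radius of analyticity, so the weighted norm bounds still need to be converted into actual analyticity (exponential decay of $\hat v(t,k)$ plus a Paley--Wiener type argument); this is routine but is part of the statement.
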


\begin{proof}

Fix $v_0\in X^{-1}$ and $v=v(t,x)$ the unique mild solution of \eqref{NStorus}, with initial data $v_0$, constructed in Theorem \ref{well-posedNS}. According to the definition of a mild solution, Definition \ref{mildsolution}, $v$ satisfies the identity in \eqref{mildformula}:
\[v(t,\cdot) = e^{\mu t \Delta}[v_0] - \int_0^t e^{\mu (t-s) \Delta} \left[\PP \dv (v \otimes v) (s,\cdot)\right] \dd s.\]

We will prove Theorem \ref{analyticityNS} in two steps, first by estimating $\vertiii{e^{\mu \sqrt{t}|D|} v}$ and then $\vertiii{e^{\alpha \mu t |D|} v}$. We follow the same ideas and calculations set forth for the $\sqrt{t}$ rate of gain of analyticity
in the proof of \cite[Theorem 1.3]{baePAMS}; we extend this to the linear growth rate of the radius of analyticity in Section
\ref{linearWeightSection}.

\subsection{Using $\mu \sqrt{t}$ in the exponential weight}

To bound $\vertiii{e^{\mu \sqrt{t} |D|} v}$ we introduce $V^1= e^{\mu \sqrt{t} |D|} v$, so that:
\begin{align} \label{mildformulaV1}
V^1(t,\cdot)& = \, e^{\mu \sqrt{t} |D|}e^{\mu t \Delta}[v_0]
- \int_0^t e^{\mu \sqrt{t} |D|}e^{\mu (t-s) \Delta} \left[\PP \dv (v \otimes v) (s,\cdot)\right] \dd s \nonumber \\
&= \, e^{\mu \sqrt{t} |D|}e^{\mu t \Delta}[v_0]  \nonumber \\
& - \int_0^t e^{\mu \sqrt{t} |D|}e^{\mu (t-s) \Delta}
\left[\PP \dv (e^{-\mu \sqrt{s}|D|} V^1 \otimes e^{-\mu \sqrt{s}|D|} V^1) (s,\cdot)\right] \dd s .
\end{align}

Let us introduce, for $F$, $G \in \mathcal{Y}$,
\begin{align}\label{tilB}
  \tilde{B}=&\tilde{B}(F,G) \nonumber \\
  \equiv &
  - \int_0^t e^{\mu \sqrt{t} |D|}e^{\mu (t-s) \Delta}
\left[\PP \dv (e^{-\mu \sqrt{s}|D|} F \otimes e^{-\mu \sqrt{s}|D|} G) (s,\cdot)\right] \dd s .
\end{align}

Then we will show that $V^1\in \mathcal{Y}$ is a solution of
\[V^1 = e^{\mu \sqrt{t} |D|}e^{\mu t \Delta}[v_0] + \tilde{B}(V^1,V^1). \]

We begin by observing that $e^{\mu \sqrt{t} |D|}e^{\mu t \Delta}[v_0]\in \mathcal{Y}$. To this end note that the $k$-th Fourier coefficient of $e^{\mu \sqrt{t} |D|}e^{\mu t \Delta}[v_0]$, $k \in \Z^3$, is
\[\mathcal{F}\left(e^{\mu \sqrt{t} |D|}e^{\mu t \Delta}[v_0]\right)(t,k) = e^{\mu \sqrt{t}|k|-\mu t|k|^{2}}\hat{v}_{0}(k).\]

We introduce the auxiliary function
\begin{equation}\label{auxfunct}
a(z)=z-\frac{1}{2}z^{2},
\end{equation}
and we note that for all $z\in\mathbb{R},$ we have $a(z)\leq\frac{1}{2}$.

Therefore we deduce the bound
\begin{equation}\label{L1Bound}
\left|\mathcal{F}\left(e^{\mu \sqrt{t} |D|}e^{\mu t \Delta}[v_0]\right)(t,k)\right| =e^{\mu a(\sqrt{t}|k|)}e^{-\frac{\mu}{2}t|k|^{2}}|\hat{v}_{0}(k)|\leq ce^{-\frac{\mu}{2}t|k|^{2}}|\hat{v}_{0}(k)|.
\end{equation}
In fact we can take $c=e^{\mu / 2}.$

It follows from \eqref{L1Bound}, in the same way as for \eqref{hestimate}, that
\begin{equation}\nonumber
  \vertiii{\,e^{\mu \sqrt{t} |D|} e^{\mu t \Delta} [v_0]\,} \leq c\left(1+\frac{2}{\mu} \right)\|v_0\|_{X^{-1}}.
\end{equation}

Next we treat the bilinear term $\tilde{B}$, defined through \eqref{tilB}. As in \eqref{kthcoeff}, we have
\begin{align*}
\hat{\tilde{B}(F,G)}(t,k)=& -\int_{0}^{t}e^{\mu\sqrt{t}|k|-\mu (t-s)|k|^{2}}
 \sum_{i,j=1}^3\left( 1 - \frac{k}{|k|^2}k_i \right) k_j && \\
 & \qquad \qquad \times \mathcal{F}\left((e^{-\mu \sqrt{s}|D|}F)_i (e^{-\mu \sqrt{s}|D|}G)_j\right)(s,k) \dd s &&\\
= &-\int_0^t e^{\mu\sqrt{t}|k|-\mu(t-s)|k|^2} \sum_{i,j=1}^3\left( 1 - \frac{k}{|k|^2}k_i \right) k_j && \\
 & \qquad  \times \sum_{\ell \in \Z^3} e^{-\mu \sqrt{s}|\ell|}\hat{F_i}(s,\ell) e^{-\mu \sqrt{s}|k-\ell|}\hat{G_j}(s,k-\ell) \dd s . &&
\end{align*}

Rearranging factors of exponentials we have
\begin{align*}
&\hat{\tilde{B}(F,G)}(t,k) & & \\
=&-\int_{0}^{t}e^{\mu(\sqrt{t}|k|-\sqrt{s}|k|-\frac{1}{2}(t-s)|k|^{2})}
e^{-\frac{\mu }{2}(t-s)|k|^{2}} \sum_{i,j=1}^3\left( 1 - \frac{k}{|k|^2}k_i \right) k_j&&\\
& \qquad \times \sum_{\ell \in \Z^3} e^{\mu\sqrt{s}(|k|-|\ell|-|k-\ell|)}\hat{F_i}(s,\ell) \hat{G_j}(s,k-\ell) \dd s . &&
\end{align*}

We may bound two of these exponential factors by constants.  First, note that since for all $k$, $\ell \in \Z^3$, the triangle inequality implies $|k|\leq|\ell|+|k-\ell|,$ we have
\begin{equation}\nonumber
e^{\mu\sqrt{s}(|k|-|\ell|-|k-\ell|)}\leq 1.
\end{equation}
Then, note that, using the auxiliary function introduced in \eqref{auxfunct}, we can rewrite the first exponential factor as follows:
\begin{equation}\nonumber
e^{\mu(\sqrt{t}|k|-\sqrt{s}|k|-\frac{1}{2}(t-s)|k|^{2})}=e^{\mu[a(\sqrt{t}|k|)-a(\sqrt{s}|k|)]}.
\end{equation}
If $s|k|^{2}$ is such that $a(\sqrt{s}|k|)\geq0,$ then $a(\sqrt{t}|k|)-a(\sqrt{s}|k|)\leq a(\sqrt{t}|k|)\leq\frac{1}{2}.$  If instead
$s|k|^{2}$ is such that $a(\sqrt{s}|k|)<0,$ then since $t\geq s$ we have $a(\sqrt{t}|k|)\leq a(\sqrt{s}|k|).$  Then
$a(\sqrt{t}|k|)-a(\sqrt{s}|k|)\leq 0.$  In either case, we have, for all $s,$ $t\geq 0$ and $k\in \Z^3,$
$a(\sqrt{t}|k|)-a(\sqrt{s}|k|)\leq\frac{1}{2}.$

With these considerations we now have
\begin{equation}\label{N1Bound}
|\hat{\tilde{B}(F,G)}(t,k)|\lesssim \int_{0}^{t}|k| e^{-\frac{\mu}{2}(t-s)|k|^{2}}\left[\sum_{i,j=1}^3 \sum_{\ell\in\mathbb{Z}^{3}}
|\hat{F}_i(s,k-\ell)||\hat{G}_j(s,\ell)|\right]\dd s.
\end{equation}
This is  the same estimate as in \eqref{relevest}, with $\mu/2$ in place of $\mu$. We then follow the remainder of the proof of Lemma \ref{keyEstimateBilinearTerm} to conclude that, indeed, $\tilde{B}$ is a bilinear operator from $\mathcal{Y}\times\mathcal{Y}$
into $\mathcal{Y}$ and
\[\vertiii{\tilde{B}(F,G)}\lesssim \vertiii{F}\vertiii{G}.\]

We can now apply the abstract result in Lemma \ref{fixedpoint} together with the estimate \eqref{L1Bound} to conclude that there exists one and only one solution $W$, such that $\vertiii{W} < (2\|\tilde{B}\|)^{-1}$, of equation \eqref{mildformulaV1}, also satisfied by $V^1$. Let $w\equiv e^{-\mu\sqrt{t}|D|}W$. Then it is easy to check that $w$ satisfies the same equation as $v$, namely \eqref{mildformula}, and $\vertiii{w}\leq\vertiii{W} < (2\|\tilde{B}\|)^{-1}$. Revisiting the statement of Lemma \ref{fixedpoint} we see that it is possible to substitute $\|\tilde{B}\|$ by $\|\tilde{B}\| + \|B\|$ by simply taking smaller initial data $v_0$. Then $\vertiii{w} \leq [2(\|\tilde{B}\|+\|B\|)]^{-1}< [2\|B\|]^{-1}$, so we conclude that $w=v$. Hence, $W=V^1$. Furthermore, we have
\[\vertiii{e^{\mu \sqrt{t}|D|}v}\equiv \vertiii{V^1}\lesssim \|v_0\|_{X^{-1}},\]
as desired.

\subsection{Using $\alpha t$ in the exponential weight}\label{linearWeightSection}
Let us fix $\alpha\in\left(0,1\right)$.  We now address the bound on $\vertiii{e^{\alpha \mu t |D|} v}$. To this end introduce
$V^2 = e^{\alpha \mu t |D|} v$, so that:
\begin{align}\nonumber
V^2(t,\cdot) = & \, e^{\alpha \mu t |D|}e^{\mu t \Delta}[v_0]
- \int_0^t e^{\alpha \mu t |D|} e^{\mu (t-s) \Delta} \left[\PP \dv (v \otimes v) (s,\cdot)\right] \dd s \nonumber \\
= & \, e^{\alpha \mu t |D|} e^{\mu t \Delta}[v_0]  \nonumber \\
& - \int_0^t e^{\alpha \mu t |D|} e^{\mu (t-s) \Delta}
\left[\PP \dv (e^{-\alpha \mu s|D|} V^2 \otimes e^{-\alpha \mu s|D|} V^2) (s,\cdot)\right] \dd s .\nonumber
\end{align}

Let us introduce, for $F$, $G \in \mathcal{Y}$,
\begin{align}\label{overlineB}
  \overline{B}=&\overline{B}(F,G) \nonumber \\
  \equiv &
  - \int_0^t e^{-\alpha \mu t|D|}e^{\mu (t-s) \Delta}
\left[\PP \dv (e^{-\alpha \mu s|D|} F \otimes e^{-\alpha \mu s|D|} G) (s,\cdot)\right] \dd s .
\end{align}

As before we will show that $V^2\in \mathcal{Y}$ is a solution of
\[V^2 = e^{\alpha \mu t |D|}e^{\mu t \Delta}[v_0] + \overline{B}(V^2,V^2). \]

We begin by bounding $e^{\alpha \mu t |D|}e^{\mu t \Delta}[v_0]$ in $\mathcal{Y}$.
The $k$-th Fourier coefficient of $e^{\alpha \mu t |D|}e^{\mu t \Delta}[v_0]$, $k \in \Z^3$, is
\begin{align}\nonumber
\mathcal{F}\left(e^{\alpha \mu t |D|}e^{\mu t \Delta}[v_0]\right)(t,k) = & e^{\alpha \mu t |k|}
e^{-\mu t|k|^{2}}\hat{v}_{0}(k) \nonumber \\
& =e^{\alpha \mu t|k|-\alpha\mu t|k|^{2}}e^{-(1-\alpha)\mu t|k|^{2}}\hat{v}_{0}(k).\nonumber
\end{align}
Then, in view of the constraint on $\alpha$, we note that the exponent of the first factor on the right-hand side satisfies
\begin{equation}\nonumber
\alpha \mu t|k|-\alpha\mu t|k|^{2}=\alpha\mu t|k|\left(1-|k|\right)\leq 0,
\end{equation}
for all $k\in\mathbb{Z}^{3}$ (we are using here the discreteness of the Fourier variable, which is not possible in the $\mathbb{R}^{3}$
setting of \cite{baePAMS}).

Therefore we deduce the bound
\begin{equation}\label{L2Bound}
\left|\mathcal{F}\left(e^{\alpha \mu t |D|}e^{\mu t \Delta}[v_0]\right)(t,k)\right|\leq e^{-(1-\alpha)\mu t|k|^{2}}|\hat{v}_{0}(k)|.
\end{equation}

It follows from \eqref{L2Bound}, in the same way as for \eqref{hestimate}, that
\begin{equation}\label{L2estimate}
  \vertiii{\,e^{\alpha \mu t |D|} e^{\mu t \Delta} [v_0]\,} \lesssim \left(1+\frac{1}{(1-\alpha)\mu} \right)\|v_0\|_{X^{-1}}.
\end{equation}

Next we treat the bilinear term $\overline{B}$, defined through \eqref{overlineB}. As in \eqref{kthcoeff}, we have
\begin{align*}
\hat{\overline{B}(F,G)}(t,k)=& -\int_{0}^{t}e^{\alpha \mu t |k|-\mu (t-s)|k|^{2}}
 \sum_{i,j=1}^3\left( 1 - \frac{k}{|k|^2}k_i \right) k_j && \\
 & \qquad \qquad \times
 \mathcal{F}{\left((e^{-\alpha \mu s|D|}F)_i (e^{-\alpha \mu s|D|}G)_j\right)}(s,k) \dd s &&\\
= &-\int_0^t e^{\alpha\mu t|k|-\mu(t-s)|k|^2} \sum_{i,j=1}^3 \left( 1 - \frac{k}{|k|^2}k_i \right) k_j && \\
 & \qquad  \times \sum_{\ell \in \Z^3} e^{-\alpha \mu s|\ell|}\hat{F_i}(s,\ell) e^{-\alpha \mu s|k-\ell|}\hat{G_j}(s,k-\ell) \dd s . &&
\end{align*}

Again we rearrange exponential factors, finding
\begin{align*}
&\hat{\overline{B}(F,G)}(t,k) \\
&=-\int_{0}^{t}e^{\alpha\mu  t|k|-\alpha \mu s|k|-\alpha\mu(t-s)|k|^{2}}e^{-(1-\alpha)\mu(t-s)|k|^{2}}\sum_{i,j=1}^3\left( 1 - \frac{k}{|k|^2}k_i \right) k_j \\
& \qquad  \qquad \times \sum_{\ell \in \Z^3} e^{\alpha \mu s(|k|-  |\ell|-  |k-\ell|)}\hat{F_i}(s,\ell) \hat{G_j}(s,k-\ell) \dd s .\\
\end{align*}

As before, we may bound two of the exponentials.  First, we again have $|k|\leq |\ell|+|k-\ell|,$ so
\begin{equation}\nonumber
e^{\alpha \mu s(|k|- |\ell|- |k-\ell|)}\leq 1.
\end{equation}
Next observe that
\begin{equation} \label{usedDiscretenesss}
\alpha\mu  t|k|-\alpha \mu s|k|-\alpha\mu(t-s)|k|^{2} = \alpha\mu (t-s)|k|\left(1 - |k|\right) \leq 0,
\end{equation}
for all $k\in\mathbb{Z}^{3},$
since $0\leq s \leq t.$
(Note that \eqref{usedDiscretenesss} once again uses discreteness of the Fourier variable.)
This implies
\begin{equation}\nonumber
e^{\alpha \mu t|k|-\alpha\mu  s|k|-\alpha\mu(t-s)|k|^{2}}\leq 1.
\end{equation}

With these considerations we now have
\begin{equation}\nonumber
|\hat{\overline{B}(F,G)}(t,k)|\lesssim \int_{0}^{t}|k| e^{-(1-\alpha)\mu(t-s)|k|^{2}}\left[\sum_{i,j=1}^3 \sum_{\ell\in\mathbb{Z}^{3}}
|\hat{F}_i(s,k-\ell)||\hat{G}_j(s,\ell)|\right]\dd s.
\end{equation}
This estimate is in the same spirit as \eqref{N1Bound}, from which we can conclude that $\overline{B}$ is a bilinear operator from $\mathcal{Y}\times\mathcal{Y}$ into $\mathcal{Y}$ and
\[\vertiii{\overline{B}(F,G)}\lesssim \vertiii{F}\vertiii{G}.\]

We can once more apply the abstract result in Lemma \ref{fixedpoint} together with the estimate \eqref{L2Bound} and use the same argument as we did for $V^1$ to conclude that
\[\vertiii{e^{\alpha \mu t |D|}v}\equiv \vertiii{V^2}\lesssim \|v_0\|_{X^{-1}},\]
as desired.

We now comment on how these bounds imply the claimed bound for the radius of analyticity.  For any $f\in\mathcal{X}^{-1},$
we have immediately that $f(t,\cdot)\in X^{-1}$ for any $t\geq0.$  Using the elementary fact that convergent series are bounded,
from $e^{\mu\sqrt{t}|D|}v(t,\cdot)\in X^{-1},$ we have the existence of $C>0$ such that
$|e^{\mu\sqrt{t}|D|}\hat{v}(t,k)|/|k|\leq C,$ for all $t$ and $k.$  Rearranging, this becomes
$|\hat{v}(t,k)|\leq C|k|e^{-\mu\sqrt{t}|k|},$
Then for all $b<\mu\sqrt{t},$ we have
$e^{b|k|}|\hat{v}(t,k)|\leq C|k|e^{(b-\mu\sqrt{t})|k|}.$
Since the right-hand side is in $\ell^{2},$ the left-hand side is also in $\ell^{2}.$
Then by the periodic analogue of Theorem IX.13 in \cite{reedSimon}, we conclude that $v$ is analytic with radius of analyticity
at least $\mu\sqrt{t}.$  We may repeat the argument on $e^{\alpha\mu t|D|}v$ as this is also in the space $\mathcal{X}^{-1},$
finding that $v$ is analytic with radius of analyticity at least $\alpha\mu t.$

This concludes the proof of Theorem \ref{analyticityNS}.

\end{proof}

We make two final remarks. First, that the maximum size of initial data, $\varepsilon_{0}$, guaranteed to exist from Theorem \ref{analyticityNS}, vanishes in the limit $\alpha\rightarrow1^{-}.$  So, the faster the linear rate at which one wishes to gain analyticity, the smaller the data one must take. This is clear from the dependence on $\alpha$ in the right-hand side of \eqref{L2estimate}, as this quantity is relevant in the abstract result Lemma \ref{fixedpoint}. Second, that the optimality of the lower bounds obtained, both in the present work and in its predecessors, is an interesting issue, which is left open.

\section*{Acknowledgments} The authors are grateful to Hantaek Bae for helpful conversations.
DMA gratefully acknowledges  National Science Foundation support through
grant DMS-1907684. MCLF was partially supported by CNPq, through grant \# 310441/2018-8, and FAPERJ, through  grant \# E-26/202.999/2017.
HJNL acknowledges the support of CNPq, through  grant \# 309648/2018-1, and of FAPERJ, through  grant \# E-26/202.897/2018.
In addition, MCLF and HJNL would like to thank the Isaac Newton Institute for Mathematical Sciences for support and hospitality during the program "Mathematical aspects of turbulence: where do we stand", when part of the work on this paper was undertaken. This work was
supported in part by: EPSRC Grant Number EP/R014604/1.

\bibliography{AL2_analyticity.bib}{}
\bibliographystyle{plain}

\end{document}